\newtheorem{teo}[equation]{Theorem}
\newtheorem{coro}[equation]{Corollary}
\newtheorem{lema}[equation]{Lemma}
\newtheorem{remark}[equation]{Remark}
\numberwithin{equation}{section}
\newcommand{\dtt}{\frac{dt}{\sqrt{t}}}
\newcommand{\Q}{\mathcal{Q}}
\renewcommand{\L}{\mathcal{L}}
\newcommand{\N}{\mathbb{N}}
\newcommand{\R}{\mathbb{R}}
\renewcommand{\(}{\left(}
\renewcommand{\)}{\right)}
\newcommand{\8}{\infty}
\renewcommand{\dj}{\frac{\partial}{\partial x_j}}
\newcommand{\e}{\varepsilon}
\begin{document}

\title[Riesz transforms and Hardy spaces ]
{On Riesz transforms characterization
 of $H^1$ spaces\\
associated with some Schr\"odinger operators}

\subjclass[2000]{42B30,  (primary), 42B35, 35J10 (secondary)}
\keywords{Riesz transforms, Hardy spaces, Schr\"odinger operators,
Semigroups of linear operators}

\begin{abstract} Let $\mathcal Lf(x)=-\Delta f (x)+V(x)f(x)$,
$V\geq 0$, $V\in L^1_{loc}(\R^d)$,
 be a non-negative self-adjoint Schr\"odinger operator on $\mathbb R^d$. We say that
 an $L^1$-function $f$ belongs to the Hardy space $H^1_{\mathcal L}$
 if the maximal function
 $$\mathcal M_{\mathcal L} f(x)=\sup_{t>0}|e^{-t\mathcal L} f(x)|$$ belongs to $L^1(\mathbb R^d)$.
 We prove that under certain   assumptions on $V$  the space $H^1_{\mathcal L}$ is
  also characterized by the Riesz transforms
 $R_j=\frac{\partial}{\partial x_j}\mathcal L^{-1\slash 2}$, $j=1,...,d$,
  associated with $\mathcal L$.
   As an example of such a potential $V$ one can take any $V\geq 0$, $V\in L^1_{loc}$, in one dimension.
\end{abstract}

\author[Jacek Dziuba\'nski ]{ Jacek Dziuba\'nski }
\address{Instytut Matematyczny\\
Uniwersytet Wroc\l awski\\
50-384 Wroc\l aw \\
Pl. Grunwal\-dzki 2/4\\
Poland} \email{jdziuban@math.uni.wroc.pl,
preisner@math.uni.wroc.pl}

\author[Marcin Preisner ]{ Marcin Preisner}

\thanks{
 The research partially
  supported by Polish Government funds for science - grant N N201 397137, MNiSW}

\maketitle

{\it In memory of Andrzej Hulanicki}.

\

\section{Introduction.}

On $\mathbb R^d$ we consider a Schr\"odinger operator $\mathcal
L=-\Delta + V(x)$, where $V(x)$ is a locally integrable
nonnegative potential, $V\not\equiv 0$. It is well known that
$-\mathcal L$ generates the semigroup $\{T_t\}_{t>0}$ of linear
contractions on $L^p(\mathbb R^d)$, $1\leq p<\infty$. The
Feynman-Kac formula implies that the integral kernels $T_t(x,y)$
of this semigroup satisfy
\begin{equation}\label{kac}
 0\leq T_t(x,y)\leq P_t(x-y)= \left(4\pi t\right)^{-d/2} \exp(-|x-y|^2\slash
 4t).
\end{equation}
 We say that an $L^1$-function $f$ belongs to the Hardy space
 $H^1_{\mathcal L}$ if the maximal function
 $$\mathcal M_{\mathcal L} f(x)=\sup_{t>0}|T_tf(x)|$$ belongs to $L^1(\mathbb R^d)$.
 We set
 $$ \| f\|_{H^1_{\mathcal L}}=\| \mathcal M_{\mathcal
 L}f\|_{L^1(\mathbb R^d)}.$$

Let $\mathcal{Q} = \{ Q_j\}_{j=1}^{\infty}$ be a family  that
consists of closed cubes with disjoint interiors such that
$\mathbb R^d$ is the closure of  ${\bigcup_{j=1}^\infty Q_j}$. We
shall always assume that there exist constants $C, \beta>0$ such
that if $Q_i^{****} \cap Q_j^{****} \neq \emptyset$ then $d(Q_i)
\leq C d(Q_j)$, where $d(Q)$ denotes the diameter of $Q$, and
$Q^*$ is the cube with the same center as $Q$ such that $d(Q^*) =
(1+\beta) d(Q)$. Clearly, there is a constant $C>0$ such that
\begin{equation}\label{cov}
\sum_{j=1}^\infty \mathbf 1_{Q_j^{****}}(x)\leq C.
\end{equation}

In order to state results from \cite{DZ5} we recall the notion of
the local Hardy space associated with the collection $\mathcal{Q}$.
We say that a function $a$ is an $H^1_\mathcal{Q}$-atom if there
exists $Q\in \Q$ such that either $a = |Q|^{-1} \mathbf 1 _Q$ or
$a$ is the classical atom with support contained in $Q^{*}$ (that
is, there is a cube $Q'\subset Q^*$
 such that $\text{supp}\, a\subset Q'$, $\int a=0$,
 $|a|\leq |Q'|^{-1}$).

 The atomic space $H^1_{\Q}$ is defined by
 \begin{equation}\label{eq14} H^1_{\Q}
 =\big\{ f: f=\sum_j \lambda_j a_j, \ \sum_j |\lambda_j|<\infty \big\},
 \end{equation}
 where $\lambda_j\in\mathbb C$, $a_j$ are $H^1_{\Q}$-atoms. We set
 $$ \| f\|_{H^1_\Q}=\inf\big\{\sum_{j}|\lambda_j|\big\},$$
 where the infimum is taken over all representations of $f$ as in
 (\ref{eq14}).

Following \cite{DZ5} we will also impose  two additional
assumptions on the potential $V$ and the collection $\Q$ of cubes,
mainly:
\begin{equation}\tag*{(D)}\label{D}
(\exists C, \e >0) \qquad \sup_{y\in Q^*} \int T_{2^n
d(Q)^2}(x,y)dx \leq Cn^{-1-\e} \quad \text{for} \  Q\in \Q, n \in
\N;
\end{equation}

\begin{equation}\tag*{(K)}\label{K}
(\exists C, \delta >0) \ \ \   \int_0^{2t} (\mathbf {1}
_{Q^{***}}V)*P_s(x) ds \leq C\left(\frac{t}{d(Q)^2}\right)^\delta
\quad \text{for} \ x\in \R^d,   \  Q\in \Q, t\leq d(Q)^2.
\end{equation}

Thorem 2.2  of \cite{DZ5} states that if we assume \ref{D} and
\ref{K} then we have the following atomic characterization of the
Hardy space $H^1_\L$:
\begin{equation}\label{atoms}
 f\in H^1_{\L} \iff f\in H^1_{\Q}. \quad \text {Moreover,} \quad C^{-1}\|f\|_{H^1_\Q} \leq  \|f\|_{H^1_\L} \leq C\|f\|_{H^1_\Q}.
\end{equation}

 For $j=1, \dots , d$, let
 $$R_j f (x)=\lim_{\varepsilon \to 0}\int_{\varepsilon}^{1\slash
 \varepsilon }\dj T_t f(x)\frac{dt}{\sqrt{t}}$$
 be the Riesz transform $\dj \mathcal L^{-1\slash 2}$
 associated with $\mathcal L$, where the limit is understood in
 the sense of distributions (see Section 2). The main result of this paper is to
 prove that, under these conditions, the operators $R_j$ characterize the space
 $H^1_{\mathcal L}$, that is, the following theorem holds.

 \begin{teo}\label{main}
Assume that a potential $V\geq 0$ and a collection of cubes $\Q$
are such that  \ref{D} and \ref{K} hold. Then there exists a
constant $C>0$ such that
 \begin{equation}\label{eq18}
 C^{-1} \| f\|_{H^1_{\mathcal L}}\leq  \|f\|_{L^1(\mathbb
 R^d)}+ \sum_{j=1}^d \| R_j f\|_{L^1(\mathbb R^d)} \leq C\|
 f\|_{H^1_{\mathcal L}}.
 \end{equation}
 \end{teo}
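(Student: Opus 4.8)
The proof naturally splits into the two inequalities of \eqref{eq18}, both organized around the atomic identification $H^1_\L=H^1_\Q$ supplied by \eqref{atoms}; note also that $\|f\|_{L^1}\le\|f\|_{H^1_\L}$ is automatic, since $T_tf\to f$ a.e.\ as $t\to0$, so $|f|\le\mathcal M_\L f$. For the right-hand inequality it suffices to bound $\|R_ja\|_{L^1}$ uniformly over $H^1_\Q$-atoms $a$ and then sum an atomic decomposition of $f$. The plan is to derive from \ref{K} the local Calder\'on--Zygmund estimates for the integral kernel $R_j(x,y)$ of $R_j$ — a size bound $|R_j(x,y)|\le C|x-y|^{-d}$ and H\"older regularity in $y$, valid for $|x-y|$ not much larger than the local scale $d(Q)$ (here \ref{K} is precisely a quantitative measure of how close $\L$ is to $-\Delta$ inside the cubes) — and to derive from \ref{D} the extra decay $\int_{|x-y|\ge d(Q)}|R_j(x,y)|\,dx\le C$, uniformly over $y\in Q$, $Q\in\Q$ (this is where the summable loss of mass $n^{-1-\e}$ is used). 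With these two facts, the standard splitting disposes of the two kinds of atoms: a classical atom supported in a cube $Q'\subset Q^{*}$ via the $L^2$-boundedness of $R_j$ on $2Q'$ together with the H\"older estimate and the cancellation of $a$ outside $2Q'$; a ``local'' atom $|Q|^{-1}\mathbf 1_Q$ via the $L^2$-boundedness of $R_j$ on $Q^{****}$ together with the decay estimate outside $Q^{****}$.

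For the left-hand inequality one is given $f,R_1f,\dots,R_df\in L^1$ and must bound $\|\mathcal M_\L f\|_{L^1}$; I would use the conjugate-harmonic-function method adapted to $\L$. Using the (known, under \eqref{kac}) equivalence of $\|\mathcal M_\L f\|_{L^1}$ with the $L^1$-norm of the Poisson maximal function $\sup_{t>0}|e^{-t\sqrt\L}f|$, set $u_0(x,t)=e^{-t\sqrt\L}f(x)$, $h(x,t)=-\L^{-1/2}e^{-t\sqrt\L}f(x)$, $u_j=\dj h$ for $j=1,\dots,d$, and $F=(u_0,\dots,u_d)=\nabla_{x,t}h$ on $\R^{d+1}_+$. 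Then $F$ satisfies the generalized Cauchy--Riemann relations $\partial_iu_k=\partial_ku_i$, has boundary traces $u_0(\cdot,0)=f$ and $u_j(\cdot,0)=-cR_jf\in L^1$, and, since $\L=-\Delta+V$, one computes $\Delta_{x,t}h=Vh$. Consequently, for any fixed $q\in\big((d-1)/d,1\big)$ the Stein--Weiss computation shows that $|F|^q$ is subharmonic in $\R^{d+1}_+$ \emph{up to a nonnegative error} $E$ that is supported where $V>0$ and, after integration, is controlled by the quantities appearing in \ref{K} and \ref{D}. One also has to prove $\sup_{t>0}\|u_j(\cdot,t)\|_{L^1}\le C\big(\|f\|_{L^1}+\|R_jf\|_{L^1}\big)$, which amounts to estimating the $V$-dependent discrepancy between $u_j(\cdot,t)$ and the Poisson extension of $-cR_jf$; this too is done cube by cube, using \ref{K} at the scale $d(Q)$ and \ref{D} to pass between scales.

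From here the argument is classical in shape. Working first on $t\ge\e$, using the subharmonicity of $|F|^q$ up to $E$ and the Green potential $\mathcal G[E]$ of $E$ in $\R^{d+1}_+$, and then letting $\e\to0$, one obtains $|F|^q\le P^{0}_tg+\mathcal G[E]$, where $P^{0}$ is the classical Poisson kernel and $0\le g\in L^{1/q}(\R^d)$ with $\|g\|_{L^{1/q}}\le C\sup_{t>0}\|F(\cdot,t)\|_{L^1}^{q}$ — the exponent $1/q>1$ is exactly what makes this step work, since a nonnegative subharmonic function bounded in $L^{1/q}$ is majorized by the Poisson integral of an $L^{1/q}$ function. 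Raising to the power $1/q$ and invoking the $L^{1/q}$-boundedness of the Hardy--Littlewood maximal operator together with $\sup_t\|F(\cdot,t)\|_{L^1}\le C(\|f\|_{L^1}+\sum_j\|R_jf\|_{L^1})$ from the previous step, the nontangential maximal function of $F$, and hence $\mathcal M_\L f$ (up to the $L^1$-equivalences of radial/nontangential and heat/Poisson maximal functions, valid under \eqref{kac}), is controlled once one knows $\big\|\sup_{t>0}\mathcal G[E](\cdot,t)^{1/q}\big\|_{L^1}\le C\big(\|f\|_{L^1}+\sum_j\|R_jf\|_{L^1}\big)$; in this estimate \ref{K} controls the part of $V$ at scales below $d(Q)$ — the integral $\int_0^{2t}(\mathbf 1_{Q^{***}}V)*P_s\,ds$ occurring in \ref{K} being precisely the quantity that appears — and \ref{D} controls the large-scale part, consistently with \eqref{cov}.

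The main obstacle is clearly this last estimate on the Green potential of $E$, together with the $L^1$-bound $\sup_t\|u_j(\cdot,t)\|_{L^1}\lesssim\|f\|_{L^1}+\|R_jf\|_{L^1}$ from the second paragraph: in both, the potential $V$ (which may be large) has to be decomposed relative to $\Q$ and fed into the subharmonic machinery only through the small quantities furnished by \ref{K} at the scale of each cube and then summed across scales by \ref{D}. That \ref{D} and \ref{K} are exactly the hypotheses under which \eqref{atoms} holds is what keeps the two halves of the proof compatible.
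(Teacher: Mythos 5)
There is a genuine gap, and it is concentrated in your argument for the first inequality of \eqref{eq18}. The paper does not run the Stein--Weiss conjugate-function/subharmonicity machinery for $\L$ at all: it uses the atomic characterization \eqref{atoms} together with Goldberg's \emph{local} Riesz transform characterization of local Hardy spaces, so that subharmonicity is only ever invoked for the free Laplacian. Concretely, with the partition of unity $\{\phi_Q\}$ it reduces the first inequality to $\sum_{Q\in\Q}\|H_{j,Q}(\phi_Qf)\|_{L^1(Q^{**})}\lesssim\|f\|_{L^1}+\|R_jf\|_{L^1}$, where $H_{j,Q}$ is the truncated \emph{classical} Riesz transform at scale $d(Q)$, and proves this by writing $H_{j,Q}(\phi_Qf)=-W_{j,Q}(\phi_Qf)+R_{j,Q,0}(\phi_Qf)$ and controlling the pieces via the perturbation bound of Lemma \ref{lem1} (from \ref{K}), the large-time bound of Lemma \ref{lem2} (from \ref{D}), the commutator Lemma \ref{lem3}, and the off-diagonal Lemma \ref{lem4}. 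Your route instead requires that $|F|^q$, with $F=\nabla_{x,t}h$ and $h=-\L^{-1/2}e^{-t\sqrt\L}f$, be subharmonic up to a ``nonnegative error $E$ controlled by \ref{K} and \ref{D}.'' Since $\Delta_{x,t}h=Vh$, the spatial components satisfy $\Delta_{x,t}u_j=\partial_{x_j}(Vh)$, which for $V$ merely in $L^1_{loc}$ (recall the theorem covers \emph{every} nonnegative $V\in L^1_{loc}$ in $d=1$) is only a distribution; the Stein--Weiss computation then does not produce a signed, locally integrable $E$, and controlling its Green potential would in any case require $L^1$-type bounds on $h=-\L^{-1/2}e^{-t\sqrt\L}f$ that do not follow from $f,R_jf\in L^1$. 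The same applies to your asserted bound $\sup_t\|u_j(\cdot,t)\|_{L^1}\lesssim\|f\|_{L^1}+\|R_jf\|_{L^1}$: since $\partial_{x_j}$ does not commute with $e^{-t\sqrt\L}$, one cannot write $u_j(\cdot,t)=-e^{-t\sqrt\L}R_jf$, and no substitute argument is given. These are not routine verifications; they are the entire difficulty, and \ref{D}, \ref{K} feed into them only through the localization scheme you have not set up.

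A smaller but real issue affects your upper-bound argument: you propose to extract from \ref{K} a pointwise size bound and H\"older regularity in $y$ of the kernel $R_j(x,y)$ at local scales. Condition \ref{K} is an integrated smallness condition on $V$ and does not yield pointwise regularity of $\partial_{x_j}T_t(x,y)$ in $y$ for general $V\in L^1_{loc}$. The paper deliberately avoids this by splitting $R_{j,Q,0}=W_{j,Q}+H_{j,Q}$: the free part $H_{j,Q}$ carries all the smoothness (handled by Goldberg's theorem), while the perturbation $W_{j,Q}$ is bounded only in the integrated sense $\sup_{y\in Q^*}\int|W_{j,Q}(x,y)|\,dx\leq C$ of Lemma \ref{lem1}, which requires no cancellation from the atom and no regularity of the perturbed kernel. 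Your treatment of the classical atoms, which leans on H\"older continuity of $R_j(x,y)$ itself, would therefore not go through as stated, although it can be repaired by exactly this subtraction.
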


 \

 \begin{remark} {\rm For $\ell >0$ denote by $\mathcal Q_{\ell}(\mathbb R^n)$ a
 partition of $\mathbb R^n$ into cubes whose diameters have length
 $\ell$. Assume that for a locally integrable nonnegative potential
  $ V_1$ on $\mathbb R^d$ and a collection $\mathcal Q$ of cubes
  the conditions (D) and (K) hold. Consider
  the potential $V(x_1,x_2)=V_1(x_1)$, $x_1\in \mathbb R^d$,
  $x_2\in\mathbb R^n$, and the family $\mathcal {\widetilde Q}=\{Q_1\times Q_2:\,
  Q_1\in\mathcal Q, \ Q_2\in\mathcal Q_{d(Q_1)}(\mathbb R^n)\}$ of
  cubes in $\mathbb R^{d+n}$. It is easily seen that the pair
  $(V,\mathcal {\widetilde Q})$ fulfils (D) and (K).}
 \end{remark}
  \begin{remark} {\rm One can check that Theorem 2.2 of \cite{DZ5} (see (\ref{atoms}))
  and Theorem \ref{main} together with their proofs remain true if
  we replace cubes by rectangles in the definition of atoms and in the
  conditions (D) and
  (K), provided the rectangles have side-lengths comparable to
  their diameters. As a corollary of this observation  we obtain that if
   $V(x_1,x_2)=V_1(x_1)+V_2(x_2)$,
  $x_1\in\mathbb R^d$, $x_2\in\mathbb R^n$, where $V_1$ and $V_2$
  satisfy conditions (D) and (K) for certain
  collections $\mathcal Q_1$ and $\mathcal Q_2$ of cubes on $\mathbb R^d$
  and $\mathbb R^n$ respectively,
  then the Hardy space $H^1_{\mathcal L}$ associated with the operator $\mathcal
  L=-\Delta +V(x_1,x_2)$ in $\mathbb R^{d+n}$ admits the atomic and the Riesz transforms
  characterizations. Indeed, for any
  $Q_j\in\mathcal Q_1$ and $Q_k\in\mathcal Q_2$ we divide the
  rectangle
  $Q_j\times Q_k$ into rectangles $Q_{j,k}^{s}$, $s=1,2,..., s_{j,k}$, with side-lengths comparable
  to $\min (d(Q_j^1), d(Q_k^2))$. It is not difficult to verify that (D) and (K) hold for
  $V(x_1,x_2)$ and   the collection $Q_{j,k}^s$.}
 \end{remark}

 \

 {\bf Examples.} We finish the section by recalling some examples of nonnegative
 potentials $V$ considered in  \cite{CZ} and  \cite{DZ5}.
 such that the semigroups generated by $\Delta -V$ satisfy  (D)
 and
 (K) for relevant collections $\mathcal Q$ of cubes.

 $\bullet$
 The Hardy space $H^1_{\mathcal L}$  associated with one-dimensional Schr\"odinger
 operator $-\mathcal L$ was studied in Czaja-Zienkiewicz \cite{CZ}. It was proved
 there that for any nonnegative $V\in L^1_{loc}(\mathbb R)$   the collection $\mathcal Q$
 of maximal dyadic intervals $Q$
 of $\R$ that are defined by the stopping time condition
 \begin{equation}\label{eq19} |Q|\int_{16Q }V(y)\, dy\leq 1,
 \end{equation}
 fulfils  (D)  for certain small  $\beta >0$ (see \cite[Lemma
 2.2]{CZ}). The authors also remarked that (K) is satisfied.
 Indeed,
 \begin{equation*}\label{eq88}\begin{split} \int_0^{2t} (\mathbf 1_{Q^{***}} V)*P_s(x)\, ds\leq
 \int_0^{2t} \| \mathbf 1_{Q^{***}}V\|_{L^1}\| P_s\|_{L^{\infty}}\, ds\leq
  \int_0^{2t}|Q|^{-1}\frac{ds}{\sqrt{4\pi s}}\leq
  C\frac{t^{1\slash 2}}{|Q|},
  \end{split}\end{equation*}
 where in the second inequality we have used (\ref{eq19}).

 $\bullet$ $V(x)=\gamma |x|^{-2}$, $d\geq 3$, $\gamma >0$. Then
 for $\mathcal Q$ being the Whitney decomposition of $\mathbb
 R^d\setminus \{0\}$ that consists of dyadic cubes the conditions
 (D) and (K) hold (see Theorem 2.8 of \cite{DZ5}).

 $\bullet$ $d\geq 3$, $V$ satisfies the reverse H\"older inequality
 with exponent $q > d\slash 2$, that is,
 $$\left(\frac{1}{|B|}\int_B V(y)^q\, dy\right)^{1\slash q}\leq C\frac{1}{|B|}
 \int_B V(y)\, dy \ \  \ \text{for every ball } B. $$
 Define the  family $\mathcal Q$
  by: $Q\in\mathcal Q$ if and only if $Q$ is the maximal
 dyadic cube for which $d(Q)^2|Q|^{-1}\int_Q V(y)\, dy \leq 1$.
 Then the
 conditions (D) and (K) are true (see \cite[Section 8]{DZ5}).

\

 Let us finally mention that the Riesz transforms characterization of
 the  Hardy spaces associated with
 Schr\"odinger operators with potentials satisfying the reverse
 H\"older inequality was proved in \cite{DZ1}.
\section{Auxiliary estimates}

 \begin{lema}\label{lem2.1}
 For every $\alpha >0$ there exists a constant $C>0$ (independent of $V$) such that for $j=1, \dots, d$ and
 $y\in\mathbb R^d$ we have
 \begin{equation}\label{EF1}
 \int_{\mathbb R} \left|\dj T_t(x,y)\right |^2\exp(\alpha |x-y|\slash
 \sqrt{t})\, dx \leq C t^{-d\slash 2-1},
 \end{equation}
 \begin{equation}\label{EF11}
\int_{\mathbb R} \left|\dj T_t(x,y)\right |\exp(\alpha |x-y|\slash
 \sqrt{t})\, dx\leq C t^{-1\slash 2}.
 \end{equation}

 \end{lema}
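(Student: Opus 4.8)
The plan is to derive both estimates from the Feynman–Kac domination \eqref{kac} together with standard Gaussian bounds for the derivative of the free heat kernel $P_t$.

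First I would record the basic pointwise bound on $\dj T_t(x,y)$. Since $T_t = T_{t/2}T_{t/2}$, one writes
\begin{equation*}
\dj T_t(x,y) = \int_{\R^d} \dj P_{t/2}(x-z)\, T_{t/2}(z,y)\, dz,
\end{equation*}
using that $\dj T_{t/2}(x,z) = \dj P_{t/2}(x-z)$ when we put the $x$-derivative on the first factor — more precisely, the perturbation formula gives $T_{t/2}(x,z)$ differentiable in $x$ with the derivative controlled by that of $P_{t/2}$. Combining $|\dj P_{t/2}(x-z)| \leq C t^{-(d+1)/2}\exp(-c|x-z|^2/t)$ with $0\leq T_{t/2}(z,y)\leq P_{t/2}(z-y)$ and the semigroup property of the Gaussians yields the clean pointwise estimate
\begin{equation}\label{ptwise}
\left|\dj T_t(x,y)\right| \leq C t^{-(d+1)/2}\exp(-c|x-y|^2/t)
\end{equation}
for some $c>0$. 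This is really the crux: everything else is integration.

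Given \eqref{ptwise}, both \eqref{EF1} and \eqref{EF11} follow by absorbing the exponential weight. For \eqref{EF11}, I would bound the integrand by $C t^{-(d+1)/2}\exp(-c|x-y|^2/t)\exp(\alpha|x-y|/\sqrt t)$; since $-c r^2 + \alpha r \leq -\tfrac{c}{2}r^2 + C_{\alpha}$ for $r = |x-y|/\sqrt t \geq 0$, the integrand is dominated by $C_\alpha t^{-(d+1)/2}\exp(-\tfrac{c}{2}|x-y|^2/t)$, whose integral over $\R^d$ in $x$ equals $C_\alpha t^{-(d+1)/2} \cdot t^{d/2} = C_\alpha t^{-1/2}$. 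For \eqref{EF1} one squares \eqref{ptwise} first, getting $t^{-(d+1)}\exp(-2c|x-y|^2/t)$, multiplies by the weight $\exp(\alpha|x-y|/\sqrt t)$, uses the same elementary inequality $-2cr^2+\alpha r \leq -cr^2 + C_\alpha$, and integrates to obtain $C_\alpha t^{-(d+1)}\cdot t^{d/2} = C_\alpha t^{-d/2-1}$. The constants depend on $\alpha$ but not on $V$, since \eqref{ptwise} is $V$-independent.

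The main obstacle is making the differentiation under the semigroup rigorous and justifying \eqref{ptwise} — that is, showing $T_{t/2}(x,z)$ is differentiable in $x$ with $\dj T_{t/2}(x,z)$ obeying a Gaussian bound uniform in $V$. The cleanest route is the Duhamel/perturbation series $T_t(x,y) = \sum_{k\geq 0}(-1)^k T_t^{(k)}(x,y)$ with $T_t^{(0)} = P_t$ and $T_t^{(k)}(x,y) = \int_0^t\!\int P_{t-s}(x-z)V(z)T_s^{(k-1)}(z,y)\,dz\,ds$; differentiating only the leading $P_{t-s}(x-z)$ factor in each term and using $0\le T_s^{(k-1)}\cdot(\text{stuff})$ bounded via \eqref{kac}, one sees the $x$-derivative falls entirely on a free heat factor and the series for $\dj T_t(x,y)$ converges to a function satisfying \eqref{ptwise}. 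Alternatively one may cite the known fact (e.g. from the Schrödinger-semigroup literature for $V\in L^1_{loc}$, $V\ge 0$) that $|\nabla_x T_t(x,y)|\leq C t^{-(d+1)/2}\exp(-c|x-y|^2/t)$; but since the paper emphasizes that $C$ is independent of $V$, I would prefer to exhibit the perturbation-series argument so that the $V$-independence is manifest. Once \eqref{ptwise} is in hand, the rest is the two-line weighted Gaussian computation sketched above.
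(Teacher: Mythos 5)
Your reduction of both estimates to the pointwise Gaussian bound $|\partial_{x_j} T_t(x,y)| \le C t^{-(d+1)/2}\exp(-c|x-y|^2/t)$, with $C$ independent of $V$, is where the argument breaks down: that bound is false at this level of generality. The identity $\partial_{x_j}T_t(x,y)=\int \partial_{x_j}T_{t/2}(x,z)\,T_{t/2}(z,y)\,dz$ leaves you with the derivative of the \emph{perturbed} kernel, not of $P_{t/2}$, and the Duhamel series does not repair this: after differentiating the factor $P_{t-s}(x-z)$ in the $k$-th term you must control $\int_0^t(t-s)^{-1/2}\int P_{t-s}(x-z)V(z)T^{(k-1)}_s(z,y)\,dz\,ds$, and for $V$ merely in $L^1_{loc}$ with no size restriction there is no $V$-independent (indeed, in general no finite) pointwise bound --- the cancellation that makes the alternating series collapse to $0\le T_t\le P_t$ is destroyed by the derivative. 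A concrete counterexample is one of the paper's own examples, $V(x)=\gamma|x|^{-2}$ in dimension $d\ge 3$: near the origin the heat kernel behaves like $(|x|/\sqrt t)^{\sigma}$ times a Gaussian for some $\sigma\in(0,1)$ (for suitable $\gamma$), so $|\nabla_x T_t(x,y)|\sim |x|^{\sigma-1}\to\infty$ as $x\to 0$ and no estimate of the form $Ct^{-(d+1)/2}$ can hold pointwise. That singularity is, however, locally square-integrable, which is exactly why the integrated statements \eqref{EF1} and \eqref{EF11} survive.

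The paper accordingly never passes through a pointwise gradient estimate. It proves \eqref{EF1} directly as a weighted $L^2$ statement: the semigroup is holomorphic on $L^2(e^{\alpha|x-y_0|}dx)$ with norm bounds independent of $V$; the quadratic form of the generator satisfies $\int|\nabla f|^2e^{\alpha|x-y_0|}dx\le \|f\|_{\mathbf Q}^2\le C\,\mathbf Q(f,f)\le C\|\mathcal L_\alpha f\|\,\|f\|$; applying this to $f=T_1(\cdot,y_0)$ together with \eqref{kac} gives \eqref{EF1} at $t=1$, and scaling (writing $T_t(x,y)=t^{-d/2}\tilde T_1(x/\sqrt t, y/\sqrt t)$ for the semigroup with rescaled potential, harmless because all constants are $V$-independent) gives general $t$. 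Then \eqref{EF11} follows from \eqref{EF1} by Cauchy--Schwarz --- the reverse of your order of deduction, where the $L^1$ bound came first and the $L^2$ bound by squaring. To salvage your outline you would need to replace the pointwise bound by such an energy estimate for $\nabla_x T_t(\cdot,y)$ in weighted $L^2$; the quadratic-form route is the standard way to obtain it uniformly in $V$.
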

The lemma is known. For reader's  convenience we give a sketch of
a proof in Section 4.

 For $\varepsilon >0$, $j=1, \dots , d$, we define the operator $$R_j^\varepsilon f(x)=\int
 R_j^\varepsilon (x,y) f(y)\, dy,$$
 where $R_j^\varepsilon (x,y)=\int_{\varepsilon}^{1\slash
 \varepsilon} \dj
 T_t(x,y)\frac{dt}{\sqrt{t}}$.
 It is not difficult to see that for $f\in L^1(\mathbb R^d)$ the
 limits $\lim_{\varepsilon \to 0} R_j^\varepsilon f(x)$ exist in the sense of
 distributions and define tempered distributions which will be denoted by
   $R_jf$. Moreover, for $\varphi \in \mathcal
 S(\mathbb R^d)$ we have
  \begin{equation}\label{estimate_R}
  |\langle R_j f, \varphi \rangle | \leq C \| f\|_{L^1(\mathbb R^d)}
 \left( \| \varphi\|_{L^2(\mathbb R^d)}+\Big \|\dj \varphi\Big\|_{L^\infty
 (\mathbb R^d)}\right). \end{equation}
 To see this we write
 \begin{equation*}
 R_j^{\varepsilon\,  *} \varphi (y)=\int_1^{1\slash \varepsilon}
 \int_{\mathbb R^d}
 \dj T_t(x,y) \varphi (x)\, dx \,
 \frac{dt}{\sqrt{t}}-\int_{\varepsilon}^1 \int_{\mathbb R^d} T_t(x,y)  \dj \varphi
 (x)\, dx\, \frac{dt}{\sqrt{t}}.
 \end{equation*}
 Since $$\int_1^\infty
 \left[ \int_{\R^d}\left| \dj T_t(x,y)\right|^2 \,
 dx\right]
 ^{\frac{1}{2}}
 \frac{dt}{\sqrt{t} } \leq C\int_1^\infty t^{-1-\frac{d}{4}} \, dt\leq
 C$$
 and
$$ \int_{\R^d}\int_0^1T_t(x,y)\, \frac{dt}{\sqrt{t}} \, dx \leq
 2$$
 (see Lemma \ref{lem2.1}),  we conclude  that  $
 R_j^{\varepsilon \, *} \varphi (y)$ converges uniformly, as $\varepsilon \to
 0$,  to a
 bounded function which will be denoted by $R_j^* \varphi (y)$,
 and
 \begin{equation*}\label{estimate-star}
 |R_j^* \varphi (y)| \leq C \left(\| \varphi\|_{L^2(\R^d)}+\Big\| \dj \varphi
 \Big\|_{L^\infty(\R^d)}\right).
 \end{equation*}

 For fixed $Q\in\mathcal \Q$ and $0 <\varepsilon < 1 $, let
 \begin{equation*}
 R_{j,Q,0}^\varepsilon
 (x,y)=\begin{cases}
 \int_{\varepsilon}^{d(Q)^2}\dj T_t(x,y)\frac{dt}{\sqrt{t}}
 \ \ \ & \text{if } \ \varepsilon <d(Q)^2<1\slash \varepsilon ;\\
 \int_\varepsilon^{1\slash \varepsilon}\dj T_t(x,y)\frac{dt}{\sqrt{t}}
 \ \ \ & \text{if } \  d(Q)^2\geq 1\slash \varepsilon ;\\
 0 \ \ & \text{if } \ d(Q)^2 \leq \varepsilon;
 \end{cases}
 \end{equation*}
 \begin{equation*}
R_{j,Q,\infty}^\varepsilon
 (x,y)=\begin{cases}
 \int^{1\slash
 \varepsilon}_{d(Q)^2}\dj T_t(x,y)\frac{dt}{\sqrt{t}}
  \ \ \ & \text{if } \ \varepsilon <d(Q)^2 <1\slash \varepsilon ;\\
 0  \ & \text{if } \  d(Q)^2 \geq 1\slash \varepsilon ;\\
  \int^{1\slash
 \varepsilon}_{\varepsilon}
 \dj T_t(x,y)\frac{dt}{\sqrt{t}} \ \ & \text{if } \ d(Q)^2 \leq
 \varepsilon.
 \end{cases}
 \end{equation*}
 Clearly, $R_j^\varepsilon (x,y)= R_{j,Q, 0}^\varepsilon (x,y)+
 R_{j,Q,\infty}^\varepsilon (x,y)$ for every $Q\in \Q$ and $0<\varepsilon
 <1$. For $f\in L^1(\mathbb R^d)$ denote
 $$R_{j,Q,0}f(x) =  \lim_{\varepsilon \to 0} \int_{\mathbb R^d} R_{j,Q,0}^\varepsilon
 (x,y) f(y)\, dy, \ \ \ R_{j,Q,\infty}f(x) =\lim_{\varepsilon \to \infty}
 \int_{\mathbb R^d} R_{j,Q,0}^\varepsilon
 (x,y) f(y)\, dy,$$
 which of course exist in the sense of
 distributions.

For $Q\in \Q$ we define
 \begin{equation*}\label{eq57}
 \Q '(Q)=\{ Q'\in \Q: Q^{***} \cap
 (Q')^{***}\ne\emptyset\}, \ \ \Q ''(Q)=\{ Q''\in\Q: Q^{***}\cap (Q'')^{***}=\emptyset\}.
 \end{equation*}

 \begin{lema}\label{lem2}
Assume \ref{D} holds. Then there exists a constant $C>0$
 such that for every $Q\in \Q$ we have
 \begin{equation}\label{in21}
  \int_{\mathbb R^d}\  \sup_{0 < \varepsilon < 1}
  |R_{j,Q,\infty}^\varepsilon (x,y)|\, dx \leq C \quad \text{for }
  y\in \bigcup_{Q'\in\mathcal Q'(Q)}
  Q'^*.
 \end{equation}
 \end{lema}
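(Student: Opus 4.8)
The plan is to split the $t$-integral defining $R_{j,Q,\infty}^\varepsilon(x,y)$ at the dyadic scales $2^n d(Q)^2$ and to estimate the $x$-integral over each dyadic annulus separately, exploiting the fact that $y$ lies in $Q'^*$ for some $Q'\in\mathcal Q'(Q)$, so that $d(Q')\sim d(Q)$ by the doubling property of $\mathcal Q$. Fix such a $y$. For $\varepsilon<1$ with $d(Q)^2<1/\varepsilon$ we have $R_{j,Q,\infty}^\varepsilon(x,y)=\int_{d(Q)^2}^{1/\varepsilon}\dj T_t(x,y)\,\dtt$; the integrand with the other ranges of $\varepsilon$ is handled in the same way (it is $0$ or a single tail). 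Writing $r=d(Q)^2$ and summing over $n\geq 0$, it suffices to bound
\begin{equation*}
\sum_{n\geq 0}\int_{\mathbb R^d}\int_{2^n r}^{2^{n+1} r}\left|\dj T_t(x,y)\right|\,\dtt\,dx
\end{equation*}
by a constant independent of $Q$ and $y$. The point of taking the supremum over $\varepsilon$ inside the $x$-integral is that, after Minkowski's inequality in the $t$-variable, $\sup_{0<\varepsilon<1}|R_{j,Q,\infty}^\varepsilon(x,y)|$ is dominated by $\int_r^\infty|\dj T_t(x,y)|\,\dtt$, so the supremum costs nothing.

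For each dyadic block I would use \ref{D} together with the pointwise Gaussian bound \eqref{kac} and the heat-kernel derivative estimate \eqref{EF11}. Concretely, split the inner $x$-integral into the region $|x-y|\leq \sqrt{t}$ and its complement. On $|x-y|>\sqrt t$ the exponential $\exp(\alpha|x-y|/\sqrt t)$ in \eqref{EF11} is at least $e^\alpha$, so $\int_{|x-y|>\sqrt t}|\dj T_t(x,y)|\,dx$ is already $\leq C t^{-1/2}$, giving $\int_{2^n r}^{2^{n+1}r}t^{-1/2}\,\dtt\sim \int_{2^nr}^{2^{n+1}r}t^{-1}\,dt\sim 1$ — summable only if we gain decay in $n$, which is exactly what \ref{D} provides. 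This is where the argument really uses the hypothesis: by the semigroup property $\dj T_t(x,y)=\int \dj T_{t/2}(x,z)\,T_{t/2}(z,y)\,dz$, and integrating in $x$ and using \eqref{EF11} for the first factor and \ref{D} for $\int T_{t/2}(z,y)$-type averages (with $t\sim 2^n d(Q)^2$, so $n'\sim n$ in condition (D)) yields $\int_{\mathbb R^d}|\dj T_t(x,y)|\,dx\lesssim t^{-1/2}\,n^{-1-\e}$ for $t\sim 2^n d(Q)^2$. Then $\sum_n \int_{2^n r}^{2^{n+1}r} n^{-1-\e} t^{-1}\,dt\sim \sum_n n^{-1-\e}<\infty$, as desired.

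The main obstacle is precisely this last interplay: turning condition \ref{D}, which controls $\sup_{y\in Q^*}\int T_{2^n d(Q)^2}(x,y)\,dx$, into a bound on $\int_{\mathbb R^d}|\dj T_t(x,y)|\,dx$ with the same $n^{-1-\e}$ gain. The honest way is to factor the semigroup as $T_t = T_{t/2}T_{t/2}$, apply $\dj$ to one factor, estimate $\|\dj T_{t/2}(\cdot,z)\|_{L^1_x}\lesssim (t/2)^{-1/2}$ from \eqref{EF11} (taking $\alpha$ small), and then recognize $\int_{\mathbb R^d}\dj T_t(x,y)\,dx$ — or rather its absolute value after the splitting — as controlled by $(t/2)^{-1/2}\sup_z\big(\text{average of }T_{t/2}(z,y)\big)$, the latter being exactly the left side of \ref{D} up to identifying the scale $t/2\sim 2^{n} d(Q')^2$ with $d(Q')\sim d(Q)$ for $y\in Q'^*$. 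One must also absorb the finitely many small-$n$ terms (say $n=0$), where $n^{-1-\e}$ is meaningless; there the crude bound $\int|\dj T_t(x,y)|\,dx\lesssim t^{-1/2}$ over the single block $t\in[r,2r]$ or $[r,4r]$ gives an $O(1)$ contribution, harmless. Finally, summing the at most $C$ overlapping cubes $Q'\in\mathcal Q'(Q)$ is immediate from \eqref{cov}, and the constant is uniform in $Q$ because \ref{D} is.
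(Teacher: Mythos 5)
Your proposal is correct and follows essentially the same route as the paper: dominate the supremum over $\varepsilon$ by the full tail integral from $d(Q)^2$ to $\infty$, decompose dyadically, and on each block factor the semigroup so that \eqref{EF11} handles the differentiated factor (giving $t^{-1/2}$) while \ref{D}, applied to the cube $Q'$ with $y\in Q'^*$ and $d(Q')\sim d(Q)$, supplies the $n^{-1-\e}$ decay that makes the sum converge. The paper's proof differs only in bookkeeping (it isolates the range $[\min(d(Q),d(Q'))^2,d(Q')^2]$ as a separate $O(1)$ term and splits $T_t$ at $2^{n-1}d(Q')^2$ rather than $t/2$), and your detour through the $|x-y|\lessgtr\sqrt t$ split is unnecessary but harmless.
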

 \begin{proof}
 Fix   $y\in \bigcup_{Q'\in\mathcal Q'(Q)}
  Q'^*$. Let $Q'\in \mathcal Q'(Q)$ be such that $y\in Q'^*$. Denote by $S$
  the left-hand side of (\ref{in21}). Then
 \begin{align*}
S &\leq \int_{\R^d} \int_{\min(d(Q),\, d(Q'))^2}^{d(Q')^2} \Big|
\dj T_t(x,y)\Big| \dtt dx+\int_{\R^d} \int_{d(Q')^2}^{\infty}
\Big| \dj T_t(x,y)\Big| \dtt dx\\&=S_1+S_2.\\
\end{align*}
Recall that $d(Q)\sim d(Q')$. Using \eqref{EF11}, we get
 $$S_1\leq C\int_{\min(d(Q),\, d(Q'))^2}^{d(Q')^2}t^{-1}\, dt\leq C.$$

Applying \eqref{EF11} and \ref{D},  we obtain
\begin{align*}
S_2&=\sum_{n=0}^\8 \int_{\R^d} \int_{2^n d(Q')^2}^{2^{n+1} d(Q')^2} \Big| \dj
T_t(x,y)\Big| \dtt dx\\
&\leq C \sum_{n=0}^\8 \int_{\R^d}
\int_{2^n d(Q')^2}^{2^{n+1} d(Q')^2} \int_{\R^d} \Big| \dj
T_{t-2^{n-1}d(Q')^2}(x,z)\Big| T_{2^{n-1}d(Q')^2}(z,y) \, dz \dtt dx\\
&\leq C \sum_{n=0}^\8  \int_{2^n d(Q')^2}^{2^{n+1} d(Q')^2} \int_{\R^d}
(2^{n}d(Q')^2)^{-1/2} T_{2^{n-1}d(Q')^2}(z,y) \, dz \dtt\\
&\leq C \sum_{n=0}^\8  \int_{\R^d} T_{2^{n-1}d(Q')^2}(z,y) \, dz
\leq C+C\sum_{n=1}^\8 n^{-1-\e}\leq C.
 \end{align*}
 \end{proof}
 For $0\leq \varepsilon <d(Q)^2$ let
 \begin{equation}\label{ww}
 W_{j,Q}^\varepsilon (x,y) =\int_{\varepsilon}^{d(Q)^2}
 \dj \big(T_t(x,y)-P_t(x-y)\big)\dtt.
 \end{equation}
 Set $W_{j,Q}^\varepsilon f(x)=\int W_{j,Q}^\varepsilon (x,y) f(y)\, dy$,
 $W_{j,Q} f=W_{j,Q}^{0}f$.
 \begin{lema}\label{lem1}
 Assuming \ref{K} there exists a constant $C>0$ such that
 for every $Q\in\Q$ one has
 \begin{equation*} \label{E1}
 \sup_{y\in Q^{*}}\int_{\mathbb R^d}\int_{0}^{d(Q)^2}
 \left|\dj \big(T_t(x,y)-P_t(x,y)\big)\right|\frac{dt}{\sqrt{t}}\,
 dx\leq C.
 \end{equation*}

 \end{lema}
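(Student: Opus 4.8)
The plan is to use the perturbation identity relating the Schr\"odinger semigroup $T_t$ to the free heat semigroup $P_t$, namely the Duhamel formula
$$T_t(x,y)-P_t(x-y)=-\int_0^t\int_{\R^d}P_{t-s}(x-z)V(z)T_s(z,y)\,dz\,ds,$$
then differentiate in $x_j$. Since $\dj P_{t-s}(x-z)=-\frac{\partial}{\partial z_j}P_{t-s}(x-z)$, an integration by parts (when $V$ is smooth; in general one argues by approximation, as the paper presumably does elsewhere) is not the cleanest route; instead I would keep the kernel as is and estimate directly. Writing $K_t(x,y):=\dj\big(T_t(x,y)-P_t(x-y)\big)$, we have
$$K_t(x,y)=-\int_0^t\int_{\R^d}\dj P_{t-s}(x-z)\,V(z)\,T_s(z,y)\,dz\,ds.$$
The goal is to bound $\int_{\R^d}\int_0^{d(Q)^2}|K_t(x,y)|\,\dtt\,dx$ uniformly over $Q\in\Q$ and $y\in Q^*$.

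First I would split the $z$-integral according to whether $z\in Q^{***}$ or $z\notin Q^{***}$. On the far region $z\notin Q^{***}$, since $y\in Q^*$, the distance $|z-y|$ is bounded below by a constant multiple of $d(Q)$, while $s\le t\le d(Q)^2$; the Gaussian bound \eqref{kac} on $T_s(z,y)$ then contributes a factor $\exp(-c\,d(Q)^2/s)$ which, integrated against $V(z)\in L^1_{loc}$ and the heat-kernel gradient (whose $x$-integral is $O((t-s)^{-1/2})$ by the standard estimate $\int|\dj P_\tau(x-z)|\,dx\le C\tau^{-1/2}$), is summable; here the only mild point is that the local integrability of $V$ together with the Gaussian decay keeps $\int_{\R^d\setminus Q^{***}}$ finite — this is handled by a dyadic annular decomposition around $Q$ and the exponential gain. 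On the near region $z\in Q^{***}$, I bound $T_s(z,y)$ crudely: after integrating $|\dj P_{t-s}(x-z)|$ in $x$ we get $C(t-s)^{-1/2}$, so
$$\int_{\R^d}|K_t(x,y)|\,dx\le C\int_0^t (t-s)^{-1/2}\Big(\int_{Q^{***}}V(z)\,T_s(z,y)\,dz\Big)ds + (\text{far term}).$$
The free estimate $T_s(z,y)\le P_s(z-y)$ then turns the inner integral into $(\mathbf 1_{Q^{***}}V)*P_s(y)$, and after the change of order of integration in $s$ and $t$ and performing the elementary $t$-integral $\int_s^{d(Q)^2}(t-s)^{-1/2}t^{-1/2}\,dt\le C$, one is left with
$$\int_{\R^d}\int_0^{d(Q)^2}|K_t(x,y)|\,\dtt\,dx\le C\int_0^{d(Q)^2}\frac{\big((\mathbf 1_{Q^{***}}V)*P_s(y)\big)}{\sqrt{s}}\,ds+C.$$

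The main obstacle — and the place where hypothesis \ref{K} enters — is controlling $\int_0^{d(Q)^2} s^{-1/2}(\mathbf 1_{Q^{***}}V)*P_s(y)\,ds$. This is done by an integration-by-parts/layer-cake argument in $s$: set $\Phi(t)=\int_0^{2t}(\mathbf 1_{Q^{***}}V)*P_s(y)\,ds$, so that \ref{K} gives $\Phi(t)\le C(t/d(Q)^2)^\delta$ for $t\le d(Q)^2$; writing the $s$-integral as a Stieltjes integral against $\Phi$ and integrating by parts, the factor $s^{-1/2}$ produces $\int_0^{d(Q)^2}\Phi(s)\,s^{-3/2}\,ds\le C\,d(Q)^{-2\delta}\int_0^{d(Q)^2}s^{\delta-3/2}\,ds$, which converges precisely because $\delta>0$ (the exponent $\delta-3/2<-1$ would be fatal, but the integral runs up to $d(Q)^2$ and the antiderivative $s^{\delta-1/2}$ evaluated there gives $d(Q)^{2\delta-1}$, matching the prefactor to yield a constant). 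The boundary term at $s=d(Q)^2$ is $\Phi(d(Q)^2)d(Q)^{-1}\le C d(Q)^{-1}$ times $d(Q)$ from the other half — all dimensionally consistent and bounded. Care must be taken that all the implicit constants are independent of $Q$ and $y\in Q^*$, which follows since every estimate used (the Gaussian bound, the heat-gradient $L^1$ bound, and \ref{K}) is uniform in these parameters.
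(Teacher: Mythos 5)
Your overall strategy (Duhamel's formula, splitting $z\in Q^{***}$ against $z\notin Q^{***}$, feeding condition \ref{K} into the near part) matches the paper's, but two of your key steps fail as written.

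\textbf{Near region.} The inner $t$-integral is not bounded: $\int_s^{d(Q)^2}(t-s)^{-1/2}t^{-1/2}\,dt=\log\big((\sqrt{d(Q)^2}+\sqrt{d(Q)^2-s})^2/s\big)\sim\log(d(Q)^2/s)$, not $O(1)$, and the display you then write down, with the extra weight $s^{-1/2}$, does not follow from the preceding line. More seriously, the quantity $\int_0^{d(Q)^2}s^{-1/2}(\mathbf 1_{Q^{***}}V)*P_s(y)\,ds$ is \emph{not} controlled by \ref{K} for general $\delta>0$: your own computation produces $\int_0^{d(Q)^2}s^{\delta-3/2}\,ds$, which diverges at the \emph{lower} endpoint $s=0$ unless $\delta>1/2$ (the endpoint $d(Q)^2$ is irrelevant), whereas \ref{K} only guarantees some $\delta>0$. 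This is a genuine obstruction, not a bookkeeping slip: for $V(z)=|z-y_0|^{-d+\epsilon}$ with $d\ge 2$ and $d-2<\epsilon\le d-1$ one gets $(V*P_s)(y_0)\sim s^{(\epsilon-d)/2}$, so the \ref{K}-type bound holds with $\delta=(\epsilon-d+2)/2\le 1/2$ while $\int_0^T s^{-1/2}(V*P_s)(y_0)\,ds=\infty$. The paper avoids this by splitting the $s$-integral at $s=t/2$: for $s<t/2$ one has $(t-s)^{-1/2}\le Ct^{-1/2}$ and \ref{K} applied at time $t$ yields $\int_0^{d(Q)^2}t^{-1}(t/d(Q)^2)^{\delta}\,dt\le C$; for $t/2<s<t$ one integrates $(t-s)^{-1/2}$ in $s$ first (gaining $t^{1/2}$) and uses $P_s\le CP_t$. (Keeping the logarithm and summing dyadically in $s$ against \ref{K} would also work, but that is not what you wrote.)

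\textbf{Far region.} The claim that ``the local integrability of $V$ together with the Gaussian decay'' keeps $\int_{\R^d\setminus Q^{***}}V(z)T_s(z,y)\,dz$ under control is false: $V\in L^1_{loc}$ imposes no growth restriction at infinity, so $\int_{|z-y|\sim R}V(z)\,dz$ may grow faster than $e^{cR^2/s}$ and the Gaussian cannot absorb it; no dyadic annular decomposition rescues this. The paper treats this region only for $x\in Q^{**}$, where the pointwise bound $|\dj P_{t-s}(x-z)|\le Cd(Q)^{-d-1}e^{-c(|x-z|/d(Q))^2}$ holds (since $x\in Q^{**}$, $z\notin Q^{***}$, $t\le d(Q)^2$), and then invokes the nontrivial global estimate $\sup_y\int_0^\infty\int_{\R^d}V(z)T_s(z,y)\,dz\,ds\le C$ (Lemma 3.10 of \cite{DZ5}) --- note this uses $T_s$, not $P_s$, and is an operator-theoretic fact about the Schr\"odinger semigroup, not a consequence of $V\in L^1_{loc}$. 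For $x\notin Q^{**}$ the perturbation formula is abandoned entirely: one bounds $|\dj(T_t-P_t)|\le|\dj T_t|+|\dj P_t|$ and uses the weighted gradient estimate \eqref{EF1} of Lemma \ref{lem2.1} together with Cauchy--Schwarz. Your proposal contains neither ingredient, and without them the far region does not close.
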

 \begin{proof}
 The proof borrows some ideas from \cite[Lemma 2.3]{CZ}. Fix $j \in \{1,\dots , d\}$ and denote
 $$ J_Q(x,y)=\int_{0}^{d(Q)^2}
 \left|\dj \big(T_t(x,y)-P_t(x,y)\big)\right|\frac{dt}{\sqrt{t}}.$$
 The perturbation formula asserts that
 \begin{equation*}\label{perturbation}
 T_t-P_t=-\int_0^t P_{t-s}VT_s\, ds.
 \end{equation*}
 Therefore
 \begin{equation*}\begin{split}
 J_Q(x,y)&\leq \int_0^{d(Q)^2}\int_0^{t/2}\int_{\mathbb R^d}\left|
 \dj P_{t-s}(x-z)\right|V_1(z)T_s(z,y)\, dz \,
 ds\frac{dt}{\sqrt{t}}\\
 & \ \ + \int_0^{d(Q)^2}\int_{t/2}^t\int_{\mathbb R^d}\left|
 \dj P_{t-s}(x-z)\right|V_1(z)T_s(z,y)\, dz \,
 ds\frac{dt}{\sqrt{t}}\\
 & \ \ + \int_0^{d(Q)^2}\int_0^t\int_{\mathbb R^d}\left|
 \dj P_{t-s}(x-z)\right|V_2(z)T_s(z,y)\, dz \,
 ds\frac{dt}{\sqrt{t}}\\
 &=J_1'(x,y)+J_1''(x,y)+J_2(x,y),
 \end{split}\end{equation*}
 where $V_1(x)=V(x)\mathbf 1 _{Q^{***}}$, $V_2(x)=V(x)-V_1(x)$.

 To evaluate $J_1'$ observe that $$\int_{\mathbb R^d} \left|\dj P_{t-s}(x-y)\right| dx\leq
 Ct^{-1/2}\ \ \text{ for } 0<s<t/2.$$
Thus, using \ref{K}, we get
 \begin{align*}
 \int_{ Q^{**}} J_1'(x,y)\, dx
 & \leq  C  \int_0^{d(Q)^2}\int_0^{t/2} \int_{\mathbb R^d}
 t^{-1/2} V_1(z)P_s(z-y)\, dz \, ds\,
 \frac{dt}{\sqrt{t}}\cr
& \leq  C  \int_0^{d(Q)^2} t^{-1/2} \(\frac{t}{d(Q)^2}\)^{\delta} \frac{dt}{\sqrt{t}}\leq C.
 \end{align*}
Similarly,
 \begin{align*}
 \int_{ Q^{**}} J_1''(x,y)\, dx
 & \leq  C  \int_0^{d(Q)^2}\int_{t/2}^t \int_{\mathbb R^d}
 (t-s)^{-1/2} V_1(z)P_t(z-y)\, dz \, ds\,
 \frac{dt}{\sqrt{t}}\cr
&=C'  \int_0^{d(Q)^2}\int_{\mathbb R^d} V_1(z)P_t(z-y)\, dz \, dt\leq C.
 \end{align*}

  In order to estimate $J_2$ we notice that
 \begin{equation}\label{est}
 \left|\dj P_{t-s}(x-z)\right|\leq C
 d(Q)^{-d-1}e^{-c(|x-z|\slash d(Q))^2}
 \end{equation}
 for $0<s<t<d(Q)^2,  \ z\notin
 Q^{***}, \ x\in Q^{**}$.
 Lemma 3.10 of \cite{DZ5} asserts that
 $$ \sup_{y\in\mathbb R^d} \int_0^\infty \int_{\mathbb
 R^d}V(z)T_s(z,y)\, dz\, ds\leq C.$$
Hence, by  \eqref{est},  we obtain
 \begin{equation*}\begin{split}
 \int_{Q^{**}}J_2(x,y)\, dx &\leq C \, d(Q)^{-1}\int_{0}^{d(Q)^2}
 \int_0^t\int_{\mathbb R} V_2(z)T_s(z,y)\, dz\, ds\,
 \frac{dt}{\sqrt{t}}\\
 &\leq C \, d(Q)^{-1}\int_{0}^{d(Q)^2}
 \int_0^t\int_{\mathbb R} V(z)T_s(z,y)\, dz\, ds\,
 \frac{dt}{\sqrt{t}}\\
&\leq C d(Q)^{-1}\int_0^{d(Q)^2} \dtt \leq C.
 \end{split}\end{equation*}

 We now turn to estimate $J_Q(x,y)$ for
  $x\notin Q^{**}$ and $y\in Q^{*}$.
 Clearly,
 \begin{equation}\nonumber\begin{split}
 \int_{Q^{**c}} J_Q(x,y)\, dx &\leq \int_{Q^{**c}} \int_0^{d(Q)^2} \left|
 \dj T_t(x,y)\right|\frac{dt}{\sqrt{t}}\, dx\\ & +
 \int_{Q^{**c}}\int_0^{d(Q)^2} \left|
 \dj P_t(x-y)\right|\frac{dt}{\sqrt{t}}\, dx
 =\mathcal J_Q'+\mathcal J_Q''.
 \end{split}\end{equation}

  Using  (\ref{EF1}) combined with the Cauchy-Schwarz inequality
 we get
 \begin{equation}\label{Cauchy}\begin{split}
 \mathcal J_Q' & \leq \int_{0}^{d(Q)^2}\left(\int_{Q^{**c}}
 \left|\dj T_t(x,y)\right|^2e^{2|x-y|\slash
 \sqrt{t}}dx\right)^{1\slash
 2}\left(\int_{Q^{**c}}e^{-2|x-y|\slash \sqrt{t}}dx\right)^{1\slash
 2}\frac{dt}{\sqrt{t}}\\
 &\leq C \int_0^{d(Q)^2} t^{-d/4-1/2} \left(\int_{Q^{**c}}
 \left(\frac{\sqrt{t}}{|x-y|}\right)^{N}\, dx\right)^{1\slash
 2}\frac{dt}{\sqrt{t}}\leq C.
 \end{split}\end{equation}
 The estimates for $\mathcal J_{Q}''$ go in the same way. Hence
 $$ \sup_{y\in Q^{*}} \int_{Q^{**c}} J_Q(x,y)\, dx \leq C.$$
 \end{proof}
 Let
 $\{\phi_Q\}_{Q\in \Q}$ be a family of smooth functions
 that form a resolution of identity associated with
 $\{Q^*\}_{Q\in\mathcal Q}$,
 that is, $\phi_Q\in C_c^\infty (Q^*)$, $0\leq \phi_Q\leq 1$,
 $|\nabla \phi_Q(x)|\leq Cd(Q)^{-1}$, $\sum_{Q\in\mathcal Q}\phi_Q(x)=1$ a.e.

 The following corollary  follows easily from Lemma \ref{lem1}.
 \begin{coro}
 For $f\in L^1(\mathbb R^d)$ we have
 \begin{equation*}\label{eq71}\lim_{\varepsilon \to 0} \| W_{j,Q}^{\varepsilon} (\phi_Q
 f) - W_{j,Q}(\phi_Q f)\|_{L^1(\mathbb R^d)}=0 \ \ \text{and} \ \
 \| W_{j,Q} (\phi_Q f)\|_{L^1(\mathbb R^d)}\leq C \| \phi_Q
 f\|_{L^1(\mathbb R^d)}
 \end{equation*}
 with $C$ independent of $Q$ and $f$.
 \end{coro}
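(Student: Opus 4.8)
The plan is to deduce both assertions directly from Lemma \ref{lem1} by Fubini's theorem and dominated convergence; no new estimates are needed.

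First I would prove the norm bound. Since $\phi_Q\in C_c^\infty(Q^*)$, we have $W_{j,Q}(\phi_Q f)(x)=\int_{Q^*}W_{j,Q}(x,y)\phi_Q(y)f(y)\,dy$, and by Tonelli's theorem
$$
\|W_{j,Q}(\phi_Q f)\|_{L^1(\R^d)}\le \int_{Q^*}|\phi_Q(y)|\,|f(y)|\(\int_{\R^d}|W_{j,Q}(x,y)|\,dx\)dy .
$$
By the definition \eqref{ww} of $W_{j,Q}=W_{j,Q}^0$ and the triangle inequality, for every $y\in Q^*$,
$$
\int_{\R^d}|W_{j,Q}(x,y)|\,dx\le \int_{\R^d}\int_0^{d(Q)^2}\Big|\dj\big(T_t(x,y)-P_t(x,y)\big)\Big|\dtt\,dx\le C,
$$
where the last inequality is exactly Lemma \ref{lem1}, with $C$ independent of $Q$. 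In particular $W_{j,Q}(\phi_Q f)$ is a genuine $L^1$ function, and combining the two displays gives $\|W_{j,Q}(\phi_Q f)\|_{L^1(\R^d)}\le C\|\phi_Q f\|_{L^1(\R^d)}$.

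For the convergence statement, set $g_\varepsilon(y)=\int_{\R^d}\int_0^{\varepsilon}\big|\dj(T_t(x,y)-P_t(x-y))\big|\dtt\,dx$ for $y\in Q^*$ and $0<\varepsilon<d(Q)^2$. From $W_{j,Q}(x,y)-W_{j,Q}^\varepsilon(x,y)=\int_0^\varepsilon\dj(T_t(x,y)-P_t(x-y))\dtt$ and Tonelli's theorem,
$$
\big\|W_{j,Q}(\phi_Q f)-W_{j,Q}^\varepsilon(\phi_Q f)\big\|_{L^1(\R^d)}\le \int_{Q^*}g_\varepsilon(y)\,|\phi_Q(y)|\,|f(y)|\,dy .
$$
Lemma \ref{lem1} gives $g_\varepsilon(y)\le C$ for all $y\in Q^*$ and all $\varepsilon\in(0,d(Q)^2)$; moreover, since the full integral $\int_0^{d(Q)^2}\!\int_{\R^d}\big|\dj(T_t(x,y)-P_t(x-y))\big|\,dx\,\dtt$ is finite for each such $y$ (again Lemma \ref{lem1}), monotone convergence yields $g_\varepsilon(y)\to 0$ as $\varepsilon\to 0$. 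As $|\phi_Q|\,|f|\in L^1(\R^d)$ dominates the integrand uniformly in $\varepsilon$, the dominated convergence theorem shows the right-hand side tends to $0$.

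There is essentially no obstacle here beyond the measurability of $g_\varepsilon$ (immediate from Tonelli) and the observation that $C$ in Lemma \ref{lem1} is uniform in $Q$; the whole point of the corollary is that the hard work — controlling the cancellation between $T_t$ and $P_t$ for small $t$ — has already been done in Lemma \ref{lem1}.
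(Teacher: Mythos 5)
Your proof is correct and follows exactly the route the paper intends: the paper simply asserts that the corollary ``follows easily from Lemma \ref{lem1},'' and your argument via Tonelli, the uniform kernel bound $\sup_{y\in Q^*}\int_{\mathbb R^d}\int_0^{d(Q)^2}|\frac{\partial}{\partial x_j}(T_t(x,y)-P_t(x,y))|\frac{dt}{\sqrt t}\,dx\leq C$, and dominated convergence is precisely the easy deduction being alluded to. Nothing is missing.
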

 \begin{lema}\label{lem3}
 There exists a constant $C>0$ such that for every $Q\in\Q$
 and every $f\in L^1(\mathbb R^d    )$ such that $\text{supp}\,
 f\subset \tilde Q =\bigcup_{Q'\in \mathcal Q'(Q)} Q'^{*}$ we have
 \begin{equation}\label{eq51}
 \| R_j(\phi_Q f)-\phi_Q R_jf\|_{L^1(\mathbb R^d)}\leq C\|
 f\|_{L^1(\tilde Q)}.
 \end{equation}
 \end{lema}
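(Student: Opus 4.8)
The estimate (\ref{eq51}) is a commutator bound for $R_j$ against multiplication by $\phi_Q$, and the plan is to reduce it to an absolutely convergent kernel estimate by exploiting the decompositions already in place. Write $R_j=R_{j,Q,0}+R_{j,Q,\infty}$ and $R_{j,Q,0}=\mathcal R_{j,Q}^{\mathrm{loc}}+W_{j,Q}$, where $\mathcal R_{j,Q}^{\mathrm{loc}}$ denotes the operator with kernel $\int_0^{d(Q)^2}\dj P_t(x-y)\dtt$. Then
\begin{equation*}
\begin{split}
R_j(\phi_Q f)-\phi_Q R_jf =\ &\big(\mathcal R_{j,Q}^{\mathrm{loc}}(\phi_Q f)-\phi_Q\mathcal R_{j,Q}^{\mathrm{loc}}f\big)+\big(W_{j,Q}(\phi_Q f)-\phi_Q W_{j,Q}f\big)\\
&+\big(R_{j,Q,\infty}(\phi_Q f)-\phi_Q R_{j,Q,\infty}f\big),
\end{split}
\end{equation*}
and I would bound the three brackets separately. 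For the last two, no cancellation is needed, since each of the four operators sends $f$ (respectively $\phi_Q f$) to an $L^1$ function of norm $\le C\|f\|_{L^1(\tilde Q)}$: one has $\|W_{j,Q}(\phi_Q f)\|_{L^1}\le C\|\phi_Q f\|_{L^1}$ by the corollary to Lemma~\ref{lem1}, while $\|\phi_Q W_{j,Q}f\|_{L^1}\le\int_{\tilde Q}|f(y)|\big(\int_{\R^d}|W_{j,Q}(x,y)|\,dx\big)\,dy$ and the inner integral is $\le C$ for $y\in\tilde Q$ (Lemma~\ref{lem1} gives this for $y\in Q^*$, and for $y\in Q'^*$ with $Q'\in\Q'(Q)$ one splits the $t$-integral at $\min(d(Q),d(Q'))^2$, the remaining range of $t$ spanning boundedly many dyadic scales because $d(Q)\sim d(Q')$, on which $\int|\dj T_t(x,y)|\,dx+\int|\dj P_t(x-y)|\,dx\le Ct^{-1/2}$). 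The two $R_{j,Q,\infty}$ terms are handled identically, using $\int_{\R^d}\sup_{0<\varepsilon<1}|R_{j,Q,\infty}^\varepsilon(x,y)|\,dx\le C$ for $y\in\tilde Q$, which is exactly Lemma~\ref{lem2} and also legitimizes the passage $\varepsilon\to0$.

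The main point is the first bracket, because $\mathcal R_{j,Q}^{\mathrm{loc}}$ is essentially a truncated classical Riesz transform and is \emph{not} bounded on $L^1$; here the cancellation of the commutator is used in an essential way. Working with the $\varepsilon$-truncation $\mathcal R_{j,Q}^{\mathrm{loc},\varepsilon}$ ($\varepsilon<d(Q)^2$), the commutator $\mathcal R_{j,Q}^{\mathrm{loc},\varepsilon}(\phi_Q\cdot)-\phi_Q\mathcal R_{j,Q}^{\mathrm{loc},\varepsilon}$ has kernel $\big(\phi_Q(y)-\phi_Q(x)\big)\int_\varepsilon^{d(Q)^2}\dj P_t(x-y)\dtt$. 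Using $|\phi_Q(y)-\phi_Q(x)|\le Cd(Q)^{-1}|x-y|$ (valid because $|\nabla\phi_Q|\le Cd(Q)^{-1}$) together with the elementary Gaussian bound $\int_{\R^d}|x-y|\,|\dj P_t(x-y)|\,dx\le C$ (with $C$ depending only on $d$, uniformly in $t$), I get
\begin{equation*}
\begin{split}
\int_{\R^d}\Big|\big(\phi_Q(y)-\phi_Q(x)\big)\int_\varepsilon^{d(Q)^2}\dj P_t(x-y)\dtt\Big|\,dx
&\le\frac{C}{d(Q)}\int_0^{d(Q)^2}\Big(\int_{\R^d}|x-y|\,|\dj P_t(x-y)|\,dx\Big)\dtt\\
&\le\frac{C}{d(Q)}\int_0^{d(Q)^2}\dtt\le C
\end{split}
\end{equation*}
for every $y\in\R^d$, with $C$ independent of $Q$, $y$ and $\varepsilon$. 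The extra factor $|x-y|$ produced by the commutator also kills the otherwise non-integrable local singularity of the classical Riesz kernel, so this kernel is absolutely integrable. Letting $\varepsilon\to0$ by dominated convergence, the displayed bound serving as an integrable majorant because $\tilde Q$ is bounded and $f\in L^1$, identifies $\mathcal R_{j,Q}^{\mathrm{loc}}(\phi_Q f)-\phi_Q\mathcal R_{j,Q}^{\mathrm{loc}}f$ with the $L^1$ function having the limiting kernel and gives it norm $\le C\|f\|_{L^1(\tilde Q)}$. Collecting the three brackets yields (\ref{eq51}).

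The part I expect to be most delicate is not any single estimate but the bookkeeping: checking that all the $\varepsilon\to0$ limits are consistent when the pieces are recombined (so that the formal identity above holds as an equality of distributions, hence of $L^1$ functions), and carrying out the routine but tedious extension of the kernel bound of Lemma~\ref{lem1} from $y\in Q^*$ to all of $\tilde Q$. Everything else reduces to the two ingredients above: the Lipschitz bound on $\phi_Q$, which supplies the gain $d(Q)^{-1}|x-y|$, and the $L^1$-type kernel bounds of Lemmas~\ref{lem1} and~\ref{lem2}.
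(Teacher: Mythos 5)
Your proof is correct, but it takes a slightly different route from the paper's, which is shorter. The paper keeps the full operator $R_{j,Q,0}$ (kernel $\int_0^{d(Q)^2}\dj T_t(x,y)\,\dtt$) intact and applies the commutator cancellation directly to $\dj T_t$: after the Lipschitz bound $|\phi_Q(y)-\phi_Q(x)|\le Cd(Q)^{-1}|x-y|$ it simply majorizes $|x-y|/\sqrt t$ by $e^{|x-y|/\sqrt t}$ and invokes the weighted estimate \eqref{EF11}, which is available for the perturbed kernel $\dj T_t$ itself and for \emph{all} $y$, giving $\int_{\R^d}\int_0^{d(Q)^2}|\dj T_t(x,y)||\phi_Q(y)-\phi_Q(x)|\,\dtt\,dx\le C$ in two lines; the large-$t$ part is then handled by Lemma~\ref{lem2} with the trivial bound $|\phi_Q(y)-\phi_Q(x)|\le 2$, exactly as you do. You instead split off the free heat kernel, exploit the cancellation only for $\dj P_t$ (where your Gaussian moment bound $\int|x-y||\dj P_t(x-y)|\,dx\le C$ is correct and plays the role of \eqref{EF11}), and absorb the perturbation $W_{j,Q}$ by absolute kernel bounds. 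This works, but it forces you to extend Lemma~\ref{lem1} from $y\in Q^*$ to $y\in\tilde Q$ — your sketch of that extension (apply Lemma~\ref{lem1} for $Q'$ and control the remaining boundedly many dyadic scales in $t$ via $\int|\dj T_t|\,dx+\int|\dj P_t|\,dx\le Ct^{-1/2}$) is sound, but it is an extra step the paper avoids entirely by never separating $P_t$ from $T_t$ here. In short: same two key ingredients (the $d(Q)^{-1}|x-y|$ gain from $\nabla\phi_Q$ for small $t$, Lemma~\ref{lem2} for large $t$), but you pay for routing the small-$t$ estimate through $P_t$ and Lemma~\ref{lem1} rather than through \eqref{EF11}.
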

 \begin{proof}
Note that
 \begin{equation*}
 R_j(\phi_Q f)(x)-\phi_Q (x) R_jf (x) =\lim_{\varepsilon \to 0}
 \int_{\varepsilon}^{1\slash \varepsilon} \int \Big(\dj T_t(x,y)\Big)\left(\phi_Q(y)-\phi_Q(x)\right)f(y)\, dy
 \frac{dt}{\sqrt{t}}.
 \end{equation*}
 From (\ref{EF1}) we conclude
 \begin{equation}\begin{split}\label{eq53}
 \int_{\mathbb R^d}\int_{0}^{d(Q)^2} &
\left|\Big(\dj T_t(x,y)\Big) \left(\phi_Q(y)  -\phi_Q(x)\right) \right| \frac{dt}{\sqrt{t}}\,
 dx\\
 & \leq \frac{C}{d(Q)}\int_{\mathbb R^d}\int_{0}^{d(Q)^2}
 \left|\dj T_t(x,y)\right|\frac{|x-y|}{\sqrt{t}}\, dt\,
 dx\\
 &\leq \frac{C}{d(Q)}\int_{\mathbb R}\int_{0}^{d(Q)^2}
 \left|\dj T_t(x,y)\right|e^{|x-y|\slash \sqrt{t}}\, dt\,
 dx
   \leq C.
 \end{split}\end{equation}
 Now (\ref{eq51}) follows from (\ref{in21}) and (\ref{eq53}).
 \end{proof}

 The following lemma is motivated by \cite[Lemma 3.8]{DZ5}.

 \begin{lema}\label{lem4}
 There exists a constant $C>0$ such that
 \begin{equation}\label{eq58}
 \sum_{Q\in\Q} \Big\| \mathbf 1_{Q^{***}}R_j\Big(\sum_{Q''\in
 \Q ''(Q)} \phi_{Q''}f\Big)\Big\|_{L^1(\mathbb R^d)}\leq C \| f\|_{L^1(\mathbb R^d)}.
 \end{equation}
 \end{lema}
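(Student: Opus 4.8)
The plan is to estimate the sum by showing that for each fixed $Q$ the operator $\mathbf 1_{Q^{***}}R_j$ acting on pieces $\phi_{Q''}f$ with $Q''\in\Q''(Q)$ has a kernel that decays fast enough in the distance between $Q$ and $Q''$, and then to sum a geometric-type series. Since $Q^{***}\cap(Q'')^{***}=\emptyset$ for $Q''\in\Q''(Q)$, on the relevant region $x\in Q^{***}$ and $y\in Q''^*\subset(Q'')^{***}$ we have $|x-y|\gtrsim\max(d(Q),d(Q''))$, so the singularity of the Riesz kernel is not seen and we may work directly with $R_j(x,y)=\int_0^\infty\dj T_t(x,y)\,\dtt$ (the $\varepsilon\to0$ limit being harmless off the diagonal). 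First I would split the $t$-integral at $\rho(x,y)^2$ where $\rho(x,y)=\max(d(Q),d(Q''),|x-y|)$, or more simply at $d(Q)^2$ and $d(Q'')^2$, using \eqref{EF11} to control the small-$t$ part and \ref{D} together with the semigroup/perturbation structure (exactly as in the proof of Lemma~\ref{lem2}) to control the large-$t$ part.

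The key steps, in order, are as follows. (1) Fix $Q$ and write $\mathbf 1_{Q^{***}}R_j(\sum_{Q''\in\Q''(Q)}\phi_{Q''}f)(x)=\sum_{Q''\in\Q''(Q)}\int \mathbf 1_{Q^{***}}(x)R_j(x,y)\phi_{Q''}(y)f(y)\,dy$, and reduce to bounding, for each pair $(Q,Q'')$, the quantity $\int_{Q^{***}}\big(\int_{(Q'')^*}|R_j(x,y)|\,dy\big)$-type expressions against $\|\phi_{Q''}f\|_{L^1}$; the goal is a bound of the form $A(Q,Q'')$ with $\sum_{Q}\sum_{Q''\in\Q''(Q)}A(Q,Q'')\mathbf 1_{(Q'')^*}(y)\le C$ for every $y$, after which Fubini and \eqref{cov} finish the job. (2) For the integral $\int_{Q^{***}}|R_j(x,y)|\,dx$ with $y\in(Q'')^*$ fixed, use $|x-y|\ge c\,\ell$ where $\ell:=d(Q^{***},(Q'')^{***})+d(Q)+d(Q'')$: split $\int_0^\infty=\int_0^{\ell^2}+\int_{\ell^2}^\infty$. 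On $\int_0^{\ell^2}$, \eqref{EF1} with the Cauchy–Schwarz trick used in \eqref{Cauchy} (pulling out a factor $(\sqrt t/|x-y|)^N$) gives decay like $(d(Q)/\ell)^{N'}$ or better after integrating in $x$ over $Q^{***}$, which has measure $\sim d(Q)^d$. (3) On $\int_{\ell^2}^\infty$, follow the dyadic decomposition $t\in[2^nd(Q'')^2,2^{n+1}d(Q'')^2]$ exactly as in Lemma~\ref{lem2}, inserting $T_t=T_{t-2^{n-1}d(Q'')^2}T_{2^{n-1}d(Q'')^2}$, applying \eqref{EF11} to the derivative factor and \ref{D} to $\int_{Q^{***}}T_{2^{n-1}d(Q'')^2}(x,y)\,dx$ (legitimate since $y\in(Q'')^*$ and $Q''\in\Q$), which produces $\sum_{n\ge n_0}n^{-1-\e}$ with $n_0\sim\log(\ell^2/d(Q'')^2)$; this tail is $\lesssim n_0^{-\e}$, i.e.\ a negative power of the logarithm of the ratio $\ell/d(Q'')$. (4) Combine: for each fixed $y$ the contributing $Q$'s with a given value of $n_0$ are finitely many per dyadic annulus by the doubling property of $\Q$, and $\sum$ over annuli of $n_0^{-1-\e'}$-type terms converges; likewise the polynomial tail from step (2) sums trivially. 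Summing over $Q''$ at the end uses that the $\phi_{Q''}$ are a partition of unity, so $\sum_{Q''}\|\phi_{Q''}f\|_{L^1}=\|f\|_{L^1}$ up to the bounded-overlap constant in \eqref{cov}.

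The main obstacle is step (4): matching the decay rates so that the double sum over $(Q,Q'')$ actually converges. The large-$t$ estimate only yields logarithmic decay $n_0^{-\e}$ in the ratio of scales, not polynomial, so one must be careful that after fixing $y$ and summing over all $Q$ with $Q''\in\Q''(Q)$ one still gets a finite constant; this is where the doubling/bounded-overlap structure of $\Q$ is essential — within each dyadic distance scale there are boundedly many cubes $Q$, and across scales the number of scales at "distance $n_0$" grows only polynomially in $2^{n_0}$ while... no: the number of cubes at distance $\sim 2^k d(Q'')$ from a fixed $y$ grows like $2^{kd}$, which would swamp a merely logarithmic gain. The resolution — and the genuinely delicate point — is that for such far-away $Q$ one does \emph{not} use \ref{D} at all but rather the Gaussian bound \eqref{kac}/\eqref{EF1}, which gives superpolynomial (in fact Gaussian-type $e^{-c\ell^2/t}$, hence after optimizing in the split point, $e^{-c(\ell/d(Q''))^{\theta}}$ or $(d(Q'')/\ell)^{N}$) decay; \ref{D} is only needed to handle the \emph{borderline} range where $\ell$ is comparable to $d(Q)$, i.e.\ for the boundedly many neighbors one scale out, where the geometric series in $n$ is replaced by the convergent $\sum n^{-1-\e}$. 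So the correct bookkeeping is: near range (finitely many $Q$ per $Q''$, handled by Lemma~\ref{lem2}-type arguments giving $O(1)$ per pair, summable because finitely many), and far range (polynomial or Gaussian decay in $\ell/d(Q'')$, summable against the polynomially-growing cardinality of annuli). I would organize the write-up along exactly this near/far dichotomy, keeping the Cauchy–Schwarz-with-weight device from \eqref{Cauchy} as the workhorse for the far range and the dyadic-in-$t$ plus \ref{D} device from Lemma~\ref{lem2} for the near range.
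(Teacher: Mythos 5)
There is a genuine gap, and it sits exactly where you flagged it: step (4). Your proposed resolution --- that for far-away $Q$ one can replace \ref{D} by Gaussian decay --- does not work for the large-time part of the kernel. For $t\gtrsim \ell^2$ the factor $e^{-c\ell^2/t}$ is of order $1$, so \eqref{EF1}/\eqref{Cauchy} give no gain in $\ell$ there; wherever you place the split point $T$, the piece $\int_T^\infty$ can only be controlled through \ref{D}, which yields decay like $\big(\log(\ell/d(Q''))\big)^{-\e}$ per pair, while the piece $\int_{\ell^2}^{T}$ loses a factor $\log(T/\ell^2)$ outright since the Gaussian is useless once $t\ge\ell^2$. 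Since the number of cubes $Q$ at distance $\sim 2^k d(Q'')$ from a fixed $y$ can grow like $2^{kd}$, a logarithmic per-pair gain cannot close the double sum, as you yourself observed before proposing the (unworkable) fix. The pair-by-pair strategy is therefore not salvageable in the form you describe.

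The paper's proof avoids the issue entirely by never asking for decay in the separation of $Q$ and $Q''$. The relation defining $\Q''(Q)$ is symmetric, so one exchanges the order of summation: for fixed $Q''$, every $Q\in\Q''(Q'')$ satisfies $Q^{***}\subset (Q'')^{**c}$, and by the bounded overlap \eqref{cov} one has $\sum_{Q\in\Q''(Q'')}\mathbf 1_{Q^{***}}\le C\,\mathbf 1_{(Q'')^{**c}}$. Thus the entire inner sum over $Q$ collapses to the single quantity $\big\|R_j(\phi_{Q''}f)\big\|_{L^1((Q'')^{**c})}$, which is then split as $R_{j,Q'',0}+R_{j,Q'',\infty}$ and bounded by $C\|\phi_{Q''}f\|_{L^1}$ using exactly the two ingredients you correctly identified: the Cauchy--Schwarz/weight estimate \eqref{Cauchy} for $t\le d(Q'')^2$ and Lemma \ref{lem2} (hence \ref{D}) for $t\ge d(Q'')^2$. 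In effect the paper integrates over the union of all the far cubes at once --- equivalently, it sums over $Q$ \emph{before} summing the dyadic time blocks --- which is what makes the merely logarithmic decay from \ref{D} sufficient. Your analytic toolkit is the right one; the missing idea is this exchange of summation plus bounded overlap, which replaces the divergent combinatorics of your step (4) by a single uniform estimate per $Q''$.
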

 \begin{proof} Let $S$ denote the left-hand side of (\ref{eq58}).
 Applying (\ref{cov}), we have
 \begin{equation}\begin{split}\label{eq90}
 S &
 \leq \sum_{Q\in\Q}\ \sum_{Q''\in\Q ''(Q)} \Big\| \mathbf
 1_{Q^{***}}
 R_j(\phi_{Q''}f)\Big\|_{L^1(\mathbb R^d)}\\
 & =\sum_{Q''\in\Q}\ \sum_{Q\in\Q ''(Q'')} \Big\| \mathbf
 1_{Q^{***}}
 R_j(\phi_{Q''}f)\Big\|_{L^1(\mathbb R^d)}\\
 &\leq C\sum_{Q''\in\Q}\  \Big\|
 R_j(\phi_{Q''}f)\Big\|_{L^1((Q'')^{**c})}\\
 &\leq C\sum_{Q''\in\Q}\  \Big\|
 R_{j,Q'',0}(\phi_{Q''}f)\Big\|_{L^1((Q'')^{**c})}+C\sum_{Q''\in\Q}\  \Big\|
 R_{j,Q'',\infty} (\phi_{Q''}f)\Big\|_{L^1((Q'')^{**c})}.
 \end{split}\end{equation}
 Using (\ref{in21}) and (\ref{cov}),  we get
 \begin{equation}\label{eq99} \sum_{Q''\in\Q}\  \Big\|
 R_{j,Q'',\infty} (\phi_{Q''}f)\Big\|_{L^1((Q'')^{**c})}\leq C \sum_{Q''\in\Q}\| \phi_{Q''}f\|_{L^1(\mathbb R^d)}\leq C'\| f\|_{L^1(\mathbb R^d)}.
 \end{equation}
Identically as in \eqref{Cauchy} for $y\in (Q'')^{*}$ we have
 $$
 \int_{(Q'')^{**c}} \int_{0}^{d(Q'')^2}\left| \dj T_t(x,y) \right|\frac{dt}{\sqrt{t}} \, dx\leq
 C,
 $$
 which implies
\begin{equation}\label{eq91} \sum_{Q''\in\Q}\  \Big\|
 R_{j,Q'',0} (\phi_{Q''}f)\Big\|_{L^1((Q'')^{**c})}\leq
 C\sum_{Q''\in\Q} \| \phi_{Q''}f\|_{L^1(\mathbb R^d)}\leq C\|
 f\|_{L^1(\mathbb R^d)}.
 \end{equation}
 The lemma is a consequence of (\ref{eq90})-(\ref{eq91}).
  \end{proof}

 \section{Proof of Theorem  \ref{main} }

 In order to prove the second inequality of (\ref{eq18}) it suffices
 by (\ref{estimate_R}) and (\ref{atoms})  to verify that  there exists a constat $C>0$
 such that
 \begin{equation}\label{eq31}
 \| R_j a\|_{L^1(\mathbb R^d)} \leq C
 \end{equation}
 for every $H^1_{\Q}$-atom $a$ and $j=1, \dots ,d$. Assume that $a$ is an $H^1_{\Q}$-atom
  supported by a cube $Q^*$, $Q\in\Q$. Then
 \begin{equation*}\begin{split}
 R_j a(x)& = \lim_{\varepsilon \to 0} \left( R_{j,Q,0}^\varepsilon a (x)
 + R_{j,Q,\infty}^\varepsilon a (x)\right)\\
 &= \lim_{\varepsilon \to 0} \left(W_{j,Q}^\varepsilon a(x)
  +H_{j,Q}^{\varepsilon}a(x)+ R_{j,Q,\infty}^\varepsilon a
 (x)\right),\\
 \end{split}\end{equation*}
  where $H_{j,Q}^\varepsilon a(x)= \int_{\varepsilon}^{d(Q)^2}
 \dj (a*P_t)(x)\dtt$. Similarly to
 (\ref{estimate_R}),
 the limit
 $$H_{j,Q} a(x)=\lim_{\varepsilon\to 0} H_{j,Q}^\varepsilon a(x)$$
 exists in the sense of distributions.
 Moreover, by the boundedness of the local
 Riesz transforms on the local Hardy spaces (see \cite{Gold}), we
 have $\| H_{j,Q}a \|_{L^1(\mathbb R^d)} \leq C$ with $C$ independent of $a$.
 Using Lemmas \ref{lem1} and \ref{lem2},  we
 obtain (\ref{eq31}).

 We now turn to prove the first inequality of (\ref{eq18}).
 To this end, by the local Riesz transform characterization of the local Hardy spaces
 (see \cite[Section 2]{Gold}),  it suffices to show that
\begin{equation}\label{eq41}
 \sum_{Q\in\Q} \| H_{j,Q}(\phi_Q f )\|_{L^1(Q^{**})}\leq
 C\left(\| f\|_{L^1(\mathbb R^d)} + \| R_j f\|_{L^1(\mathbb R^d)}\right), \ \ j=1,...,d.
\end{equation}

 Clearly,
 \begin{equation*}\label{eq42}
 H_{j,Q}(\phi_Q f) = - W_{j,Q}(\phi_Q f)+R_{j,Q,0}(\phi_Q f).
 \end{equation*}
 Lemma \ref{lem1} together with (\ref{cov}) implies
 \begin{equation}\label{eq60}
 \sum_{Q\in\Q} \| W_{j,Q} (\phi_Q f)\|_{L^1(\mathbb R^d)}\leq
 C\sum_{Q\in\Q} \| \phi_Q f\|_{L^1(\mathbb R^d)} \leq C \|
 f\|_{L^1(\R^d)}.
 \end{equation}
 Note that
 \begin{equation}\begin{split}\label{eq43}
 R_{j,Q,0}(\phi_Q f)
 &=-R_{j,Q,\infty} (\phi_Q f)
 + \Big[R_j \Big(\phi_Q  \sum_{Q'\in\Q '(Q)} (\phi_{Q'} f)\Big)-\phi_{Q}
 R_j\Big(\sum_{Q'\in\Q '(Q)}
 (\phi_{Q'}f)\Big)\Big]\\
 &\ \ -\phi_Q R_j\Big(\sum_{Q''\in\Q ''(Q)} (\phi_{Q''}
 f)\Big) + \phi_Q R_j f.
 \end{split}\end{equation}
  Lemmas \ref{lem2}, \ref{lem3}, and \ref{lem4} combined with
  (\ref{eq43}) imply
 \begin{equation}\label{eq61}\begin{split}
 \sum_{Q\in\Q}\| R_{j,Q,0} (\phi_Q f)\|_{L^1(Q^{**})} & \leq
 C\Big(
 \sum_{Q\in\Q} \| \phi_Q f\|_{L^1(\R^d)}+\sum_{Q\in\Q}\sum_{Q'\in\Q '(Q)} \| \phi_{Q'} f\|_{L^1(\R^d)}\\
 &\ \ \ \  + \|
 f\|_{L^1(\R^d)}
   +\sum_{Q\in\Q} \| \phi_Q R_j f\|_{L^1(\R^d)}\Big)\\
   & \leq C\left( \| f\|_{L^1(\R^d)} + \| R_j f\|_{L^1(\R^d)}\right).
  \end{split}\end{equation}
 Now (\ref{eq41}) follows from (\ref{eq60}) and (\ref{eq61}).

\section{Proof of Lemma \ref{lem2.1}}
 The proof is based on estimates of the semigroup $T_t$ acting on
 weighted $L^2$ spaces. This technique was utilize e.g. in
 \cite{DH}, \cite{hebisch}, \cite{DZ56}.

Fix $y_0\in\mathbb R^d$ and $\alpha >0$. The semigroup
$\{T_t\}_{t>0}$ acting on $L^2(e^{\alpha |x-y_0|}dx)$ has the
unique extension to a holomorphic semigroup $T_\zeta$, $\zeta\in
\{\zeta\in \mathbb C: |\text{Arg}\, \zeta|<\pi\slash 4\}$ such
that
\begin{equation}\label{holo1}
 \| T_\zeta \|_{L^2(e^{\alpha |x-y_0|}dx)\to L^2(e^{\alpha |x-y_0|}dx)
 }\leq Ce^{c'\alpha^2\Re \zeta}
\end{equation}
with $C$ and $c'$ independent of $V$ and $y_0$ (see, e.g.,
\cite[Section 6] {DZ56}). Let $-\mathcal L_\alpha $ denote the
infinitesimal generator of
 $\{T_t\}_{t>0}$ considered on $L^2(e^{\alpha |x-y_0|}dx)$. The
 quadratic form $\mathbf Q=\mathbf Q_{\alpha \, ,y_0}$
 associated with $\mathcal L_\alpha$ is given by
 \begin{equation}\begin{split}\label{holo2}
\mathbf Q(f,g)& =\sum_{j=1}^d \int_{\mathbb R^d}
\frac{\partial}{\partial x_j}f(x)\frac{\partial}{\partial
 x_j}\overline{g(x)} e^{\alpha |x-y_0|}\, dx + \int_{\mathbb R^d}
 V(x)f(x)\overline{g(x)}e^{\alpha |x-y_0|}\, dx\\
 &\ \ +\sum_{j=1}^d \int_{\mathbb R^d} f(x)
 \overline{g(x)}\frac{\partial}{\partial x_j} e^{\alpha|x-y_0|}\,
 dx,
 \end{split}\end{equation}
$$D(\mathbf Q) =\{ f\in L^2(e^{\alpha |x-y_0|}dx): \,
V(x)^{1\slash 2} f(x), \ \frac{\partial}{\partial x_j} f(x)\in
L^2(e^{\alpha |x-y_0|}dx), \ j=1,...,d\}.$$
 Note that
 $$\left|\frac{\partial}{\partial x_j}e^{\alpha |x-y_0|}\right|\leq C
 \alpha e^{\alpha |x-y_0|} \ \ \  \text{for }  x\ne y_0.$$
  Clearly,
 \begin{equation}\label{holo3}
 |\mathbf Q(f,g)|\leq C_\alpha \| f\|_{\mathbf Q}\| g\|_{\mathbf Q}
 \end{equation}
 with $C_\alpha $ independent of $y_0$ and $V$, where
 $$ \| f\|_{\mathbf Q}^2= \int_{\mathbf R^d}
 \left( \sum_{j=1}^d \left|\frac{\partial}{\partial x_j} f(x)\right|^2
 +V(x)|f(x)|^2+|f(x)|^2\right)e^{\alpha |x-y_0|}dx.$$
 Moreover, there exists a constant $C>0$ independent of $V$ and
 $y_0$ such that
 \begin{equation}\label{holo4}
 \| f\|_{\mathbf Q}^2\leq C\mathbf Q(f,f).
 \end{equation}
 The holomorphy of the semigroup $T_t$ combined with (\ref{holo1})
 imply
 \begin{equation}\label{holo5}
 \| \mathcal L_\alpha T_t g\|_{L^2(e^{\alpha |x-y|}dx)}\leq
 C't^{-1} e^{c''t\alpha^2 } \|g\|_{L^2(e^{\alpha |x-y_0|}dx)}
 \end{equation}
 with constants $C'$ and $c''$ independent of $V$ and $y_0$.
 Setting $g(x)=T_{1\slash 2}(x,y_0)$, $f(x)=T_{1\slash 2}
 g(x)=T_1(x,y_0)$ and using  (\ref{holo4}), (\ref{holo5}), (\ref{holo1}), and (\ref{kac}), we get
 \begin{equation}\label{holo6}
 \begin{split}
 \left\| \frac{\partial}{\partial x_j}T_1(x,y_0)\right
 \|^2_{L^2(e^{\alpha |x-y_0|}dx)} & \leq \| f\|_{\mathbf Q}^2\\
 & \leq C \mathbf Q(f,f) \\
 &\leq C \| \mathcal L_\alpha f\|_{L^2(e^{\alpha |x-y_0|}dx)} \|
 f\|_{L^2(e^{\alpha |x-y_0|}dx)}\\
 &\leq C''\| g\|_{L^2(e^{\alpha |x-y_0|}dx)}^2\leq C'''
 \end{split}\end{equation}
 with $C'''$ independent of $y_0$ and $V$. Since
 $T_t(x,y)=t^{-d\slash 2}\tilde T_1(x\slash \sqrt{t},y\slash
 \sqrt{t})$, where $\{\tilde T_s\}_{s>0}$ is the semigroup
 generated by $\Delta - tV(\sqrt{t}x)$, we get (\ref{EF1}) from
 (\ref{holo6}), because $C'''$ is independent of $V$ and $y_0$.
 Now (\ref{EF11}) follows from (\ref{EF1}) and the
 Cauchy-Schwarz inequality.

\end{document}